\definecolor{mycolorc1}{RGB}{255, 63, 164}
\definecolor{mycolorc2}{RGB}{255, 163, 60}
\definecolor{mycolorc3}{RGB}{177, 94, 255}
\definecolor{mycolorc4}{RGB}{61, 48, 162}
\newtheorem{assumption}{Assumption}
\newcommand{\argmin}{\mathop{\rm argmin}}
\title[Learning-based Rigid Tube MPC]{Learning-based Rigid Tube Model Predictive Control}
\author{
\Name{Yulong Gao}$^1$ \Email{yulong.gao@imperial.ac.uk} \\
\Name{Shuhao Yan}$^2$ \Email{shuhao.yan@mathematik.uni-stuttgart.de} \\
\Name{Jian Zhou}$^3$ \Email{jian.zhou@liu.se} \\
\Name{Mark Cannon}$^4$ \Email{mark.cannon@eng.ox.ac.uk} \\
\Name{Alessandro Abate}$^5$ \Email{alessandro.abate@cs.ox.ac.uk}\\
\Name{Karl H. Johansson}$^6$ \Email{kallej@kth.se} \\
\addr $^1$  Department of Electrical and Electronic Engineering, Imperial College London, UK\\
\addr $^2$ Department of Mathematics, University of Stuttgart, Germany \\
\addr $^3$ Department of Electrical Engineering, Link\"oping University, Sweden\\
\addr $^4$ Department of Engineering Science, University of Oxford, UK\\
\addr $^5$  Department of Computer Science, University of Oxford, UK\\
\addr $^6$ Division of Decision and Control Systems, KTH Royal Institute of Technology, Sweden
}
\begin{document}

\maketitle
\vspace{-1cm}
\begin{abstract}
 This paper is concerned with model predictive control (MPC) of discrete-time linear systems subject to bounded additive disturbance and mixed constraints on the state and input, whereas the true disturbance set is unknown. Unlike most existing work on robust MPC, we propose an algorithm incorporating online learning that builds on prior knowledge of the disturbance, i.e., a known but conservative disturbance set. We approximate the true disturbance set at each time step with a parameterised set, which is referred to as a quantified disturbance set, using disturbance realisations. A key novelty is that the parameterisation of these quantified disturbance sets enjoys desirable properties such that the quantified disturbance set and its corresponding rigid tube bounding disturbance propagation can be efficiently updated online. We provide statistical gaps between the true and quantified disturbance sets, based on which, probabilistic recursive feasibility of MPC optimisation problems is discussed. Numerical simulations are provided to demonstrate the effectiveness of our proposed algorithm and compare with conventional robust MPC algorithms.
\end{abstract}

\begin{keywords}
  Rigid tube MPC, Learning uncertainty, Scenario approach.
\end{keywords}

\begin{sloppypar}
\section{Introduction}\label{sec:intro}
Robust model predictive control (MPC) \citep{goodwin2014robust} can ensure constraint satisfaction by design under all realisations of uncertainty. This guarantee is essential in many practical applications, such as safety critical systems and power system operation. Tube-based methods \citep{langson2004robust,mayne2011robust} are one of the most well-known approaches in robust MPC.  Relying on knowledge of the worst-case disturbance bounds, one can calculate a sequence of sets to bound disturbance propagation over an infinite horizon. This sequence is referred to as a tube. Nevertheless, tube-based methods suffer from potential conservatism, as the disturbance sets used in these worst-case strategies can be overly conservative, resulting in small feasible sets \citep{saltik2018outlook}. 
The tube is usually either constructed  offline, or is otherwise subject to constraints based on prior  information on model uncertainty, and it can therefore be difficult to exploit information obtained during online implementation
to refine the uncertainty model and update the tube accordingly. 
This precludes the use of data-driven methods, i.e., the scenario approach \citep{calafiore2006scenario} and sample average approximation \citep{doi:10.1137/070702928}, to further quantify uncertainty on the basis of online measurements.

\begin{wrapfigure}{r}{0.3\textwidth}
    \centering
    \includegraphics[width=0.3\textwidth]{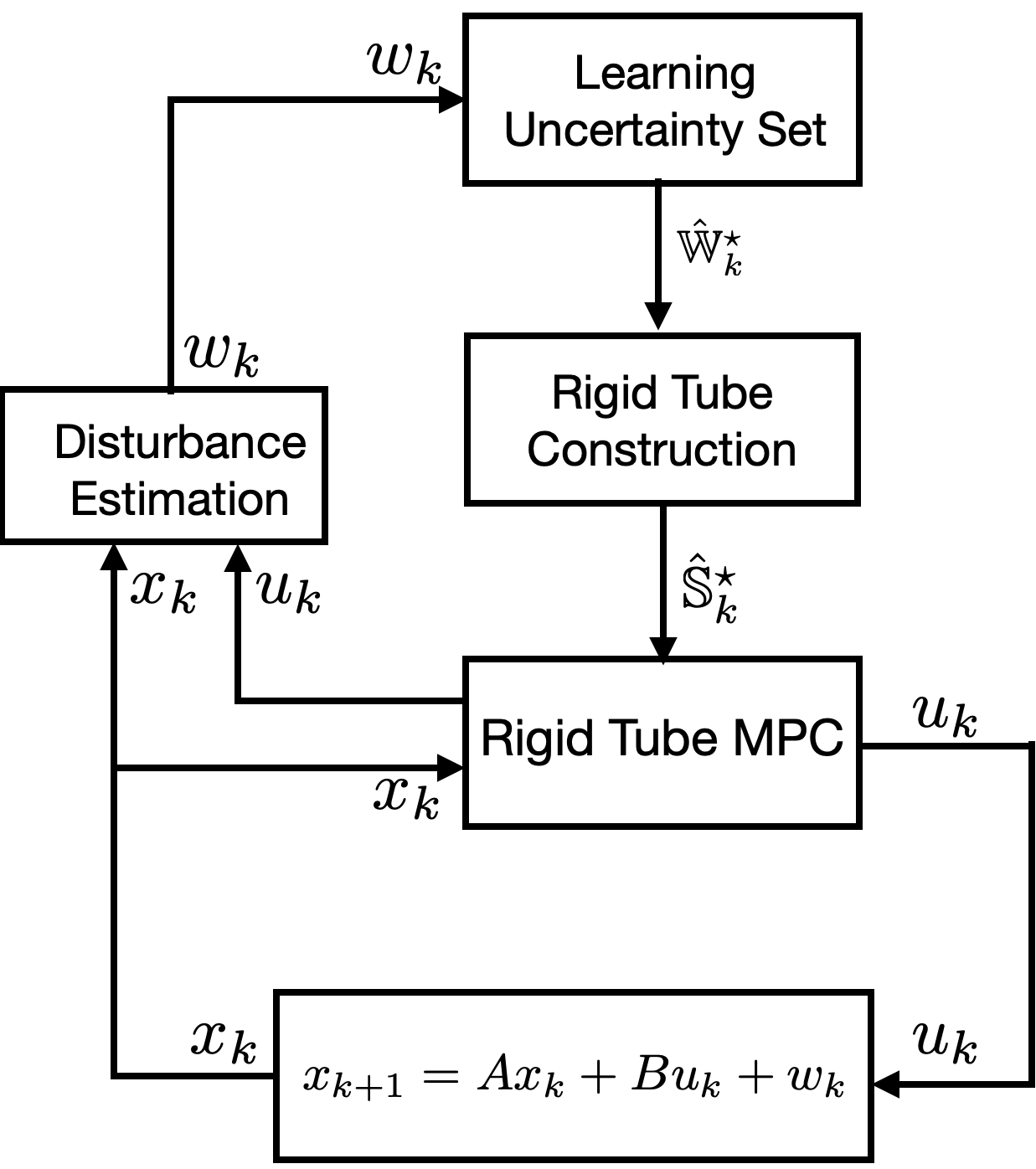}
    \caption{\small The framework.}
    \label{Fig:HILP}
\end{wrapfigure}
To mitigate the aforementioned conservatism, we propose an MPC algorithm incorporating online learning of the exact yet unknown disturbance set $\mathbb{W}_{\rm true}$. The framework is shown in Fig.~\ref{Fig:HILP}. Given a known yet conservative disturbance set $\mathbb{W}$ ($\supseteq\mathbb{W}_{\rm true}$) and disturbance realisations, we construct a set $\hat{\mathbb{W}}^{\star}_k$ as a homothetic transformation of $\mathbb{W}$ to approximate $\mathbb{W}_{\rm true}$ at each time step. Accordingly, the MPC optimisation is updated at each time step. The main contributions of this paper are: 1) The parameterisation of set  $\hat{\mathbb{W}}^{\star}_k$ enables the scenario programs yielding the problem to be equivalently rewritten as linear programs (LPs). 2) We provide statistical gaps between $\mathbb{W}_{\rm true}$ and $\hat{\mathbb{W}}^{\star}_k$. The latter leads to larger feasible sets of initial conditions than that of the tube MPC based on $\mathbb{W}$. 3) The rigid tube $\{\hat{\mathbb{S}}^{\star}_k,\hat{\mathbb{S}}^{\star}_k,\ldots\}$  based on the set $\hat{\mathbb{W}}^{\star}_k$ is updated efficiently online, without resorting to recomputing an outer approximation of the Minkowski sum of infinitely many polytopic sets.

\paragraph*{Related work} Recent developments in data-driven methods have considerably improved our ability to infer model uncertainty from measurements. In chance-constrained problems, a typical approach is to identity a high-probability region of the uncertainty space using samples.
This region is then used to determine robust constraints that conservatively replace the original chance constraints.
Different parameterisations of the region have been proposed, including hyper-rectangles \citep{6727399}, norm balls \citep{9206383} and a finite union of norm balls \citep{9303863}. Similar ideas are applied to uncertainty in stochastic MPC problems \citep{8727721,shang2019data}, where support vector clustering is used to identify a high-density region of disturbance. Conformal prediction \citep{shafer2008tutorial} is another technique for uncertainty quantification. For example, in \cite{dixit2023adaptive,lindemann2023safe}, it is used to quantify uncertainty in predicted agent trajectories and to provide high-confidence regions surrounding the predicted trajectories using past observations. These regions are then employed in an MPC problem, which is solved to perform motion planning with probabilistic safety guarantees.

In the literature of robust and stochastic MPC problems, the scenario approach is often used to handle uncertainty. A typical application is to approximate a probabilistic constraint with a number of hard constraints, see, e.g., \cite{6426462,SCHILDBACH20143009}. This approximation results in significant computational challenges. To avoid this issue, \citet{8882241} exploits probabilistic reachable sets which are constructed offline using the scenario approach. Although this work provides closed-loop constraint satisfaction, it uses an MPC initialisation scheme \citep{HEWING2020109095}, in which there is no direct feedback from true state measurements. It is yet unclear whether this setting leads to adequate closed-loop performance in general \citep{MAYNE2018169}.

\paragraph*{Notation} $\mathbb{N}$ is the set of nonnegative integers. $\mathbb{N}_{\geq n}$ is the set of nonnegative integers that are larger than or equal to $n$. $\mathbb{R}$ is the set of real numbers.  For sets $\mathbb{X}$ and $\mathbb{Y}$, $\mathbb{X}\oplus \mathbb{Y}=\{x+y\mid x\in\mathbb{X}, y\in \mathbb{Y}\}$. 
Matrices of appropriate dimension with all elements equal to 1 and 0 are denoted by $\bm{1}$ and $\bm{0}$, respectively. A positive (semi-) definite matrix $Q$ is denoted by $Q\succ 0$ ($Q\succcurlyeq 0$).  The cardinality of a set $\mathbb{X}$ is denoted by $|\mathbb{X}|$.


\section{Problem Description}\label{Sec:Preliminaries}


We consider an uncertain discrete-time linear system described by
\begin{eqnarray}\label{LTIsystem}
x_{k+1} = Ax_{k}+Bu_{k}+w_k, 
\end{eqnarray}
which is subject to a mixed constraint $Fx_k+Gu_k\leq \bm{1}$ with $F\in \mathbb{R}^{n_c \times n_x}$ and $G\in  \mathbb{R}^{n_c \times n_u}$. In \eqref{LTIsystem}, $x_k \in \mathbb{R}^{n_{x}}$ is the state,  $u_k\in \mathbb{R}^{n_{u}}$ the control input, and $w_k\in\mathbb{W}_{\rm true}\subset\mathbb{R}^{n_{x}}$ the additive disturbance. We assume perfect state feedback and that the matrices $A$, $B$ are known, while the \emph{true disturbance set} $\mathbb{W}_{\rm true}$ is unknown. 

\begin{assumption}\label{Ass}
The pair $(A,B)$ is stabilisable, and set $\mathbb{W}_{\rm true}$ is a convex subset of a known convex and compact polytope $\mathbb{W}$ in the form of $\mathbb{W}=\{w\in \mathbb{R}^{n_x}\mid  V w\leq \bm{1}\}$, where $V\in \mathbb{R}^{n_v\times n_x}$. The disturbance realisations $w_k$ of the system~\eqref{LTIsystem} are independent and identically distributed (i.i.d.) according to an unknown probability distribution ${\rm Pr}$ with support $\mathbb{W}_{\rm true}$. 
\end{assumption}

\subsection{Rigid Tube MPC based on Conservative Disturbance Set $\mathbb{W}$}
We first review the theory of rigid tube MPC. 
Given a prediction horizon $N$, one can decompose the predicted dynamics at time step $k$ as
\begin{equation} \label{Eq:predyn}
x_{i|k} = s_{i|k}+e_{i|k},  \ u_{i|k} = Kx_{i|k}+c_{i|k}, \ s_{i+1|k} = \Phi s_{i|k}+Bc_{i|k}, \ e_{i+1|k} = \Phi e_{i|k}+w_{i|k},
\end{equation}
where $\Phi=A+BK$ and $w_{i|k}\in \mathbb{W}$ for all $i\in\mathbb{N}$. In \eqref{Eq:predyn}, $s_{i|k}$ and $e_{i|k}$ are nominal and uncertain components of the state, respectively. The feedback gain $K\in \mathbb{R}^{n_u \times n_x}$ is fixed, while the free variable is $\bm{c}_k=[c^T_{0|k} \ \cdots \ c^T_{N-1|k}]^T$ with $c_{i|k}=\bm{0}$ for $i\in \mathbb{N}_{\geq N}$.

Given the state decomposition \eqref{Eq:predyn}, we consider a nominal predicted cost as the objective of online MPC optimisation at time step $k$ given by 
\begin{align}\label{Eq:nomcost}
J(s_{0|k}, \bm{c}_k)=\sum_{i=0}^{\infty}(\|s_{i|k}\|^2_{Q}+\|v_{i|k}\|^2_{R}),
\end{align}
where $Q\succcurlyeq 0$, $R\succ 0$, and $v_{i|k}=Ks_{i|k}+c_{i|k}$.
Suppose the  matrix pair $(A,Q)$ is observable. Then, there exists a unique solution, $P_x (\succ 0)$, to the following algebraic Riccati equation
\begin{align*}
P_x=A^TP_xA+Q-A^TPB(B^TP_xB+R)^{-1}B^TP_xA.
\end{align*}
Let $K=-(B^TP_xB+R)^{-1}B^TP_xA$, which ensures that $\Phi$ is strictly stabilising. As shown in \cite{kouvaritakis2016model}, \eqref{Eq:nomcost} can be rewritten in a compact form as
$$
J(s_{0|k}, \bm{c}_k)=\|s_{0|k}\|^2_{P_x}+\|\bm{c}_k\|^2_{P_c}, ~\text{with} \ P_c={\rm diag}\{B^TP_xB+R, \cdots, B^TP_xB+R\}.
$$

\begin{lemma}[{\cite{rakovic05}}]\label{Lemma:tubeforW}
If  Assumption~\ref{Ass} holds and $\Phi$ is strictly stable, there exist a finite integer $r$ and a scalar $\rho\in [0,1)$ such that  (i) $\Phi^r\mathbb{W}\subseteq \rho \mathbb{W}$
and (ii) the set 
\begin{align}\label{Eq:Wtube}
    \mathbb{S}=\frac{1}{1-\rho} \bigoplus_{i=0}^{r-1}\Phi^i\mathbb{W}
\end{align} is a convex and compact set,  
satisfying $\Phi \mathbb{S} \oplus \mathbb{W}\subseteq \mathbb{S}$ and $\bigoplus_{i=0}^{\infty}\Phi^i\mathbb{W}\subseteq\mathbb{S}$.
\end{lemma}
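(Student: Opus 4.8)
The plan is to follow the classical construction of the minimal RPI outer approximation, carried out in two stages: first establishing the existence of the pair $(r,\rho)$, then verifying that the scaled finite Minkowski sum $\mathbb{S}$ in \eqref{Eq:Wtube} is convex, compact, and RPI. For the first stage, since $\Phi$ is strictly stable its spectral radius is strictly less than $1$, so $\Phi^i \to 0$ geometrically; because $\mathbb{W}$ is compact and contains the origin in its interior (Assumption~\ref{Ass:Wset}), for any fixed $\rho\in(0,1)$ there exists a ball $B_\delta(0)\subseteq\mathbb{W}$, and for $i$ large enough $\Phi^i\mathbb{W}\subseteq \rho B_\delta(0)\subseteq \rho\mathbb{W}$. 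Taking the smallest such index gives the finite integer $r$ satisfying (i). Concretely one can pick $\rho$ first and then search for $r$, or equivalently fix a candidate $r$ and set $\rho=\min\{\alpha\ge 0: \Phi^r\mathbb{W}\subseteq\alpha\mathbb{W}\}$, which is computable via the support function / an LP using the $H$-representation $Vw\le\bm 1$.

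For the second stage, convexity and compactness of $\mathbb{S}$ are immediate: $\mathbb{W}$ is a convex compact polytope, each $\Phi^i\mathbb{W}$ is therefore convex and compact, a finite Minkowski sum of such sets is convex and compact, and scaling by $\tfrac{1}{1-\rho}$ preserves both properties. The substantive step is the RPI inclusion $\Phi\mathbb{S}\oplus\mathbb{W}\subseteq\mathbb{S}$. Writing $\mathbb{S}=\tfrac{1}{1-\rho}\bigoplus_{i=0}^{r-1}\Phi^i\mathbb{W}$, apply $\Phi$ and add $\mathbb{W}$:
\begin{align*}
\Phi\mathbb{S}\oplus\mathbb{W}
&= \frac{1}{1-\rho}\bigoplus_{i=1}^{r}\Phi^i\mathbb{W}\ \oplus\ \mathbb{W}\\
&= \frac{1}{1-\rho}\Phi^r\mathbb{W}\ \oplus\ \frac{1}{1-\rho}\bigoplus_{i=1}^{r-1}\Phi^i\mathbb{W}\ \oplus\ \mathbb{W}.
\end{align*}
Now use (i), $\Phi^r\mathbb{W}\subseteq\rho\mathbb{W}$, so $\tfrac{1}{1-\rho}\Phi^r\mathbb{W}\subseteq\tfrac{\rho}{1-\rho}\mathbb{W}$, and combine this term with $\mathbb{W}$: $\tfrac{\rho}{1-\rho}\mathbb{W}\oplus\mathbb{W}=\tfrac{1}{1-\rho}\mathbb{W}=\tfrac{1}{1-\rho}\Phi^0\mathbb{W}$. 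Substituting back recovers exactly $\tfrac{1}{1-\rho}\bigoplus_{i=0}^{r-1}\Phi^i\mathbb{W}=\mathbb{S}$, giving the inclusion. Here I use that Minkowski sum distributes over the operations involved and that $\alpha\mathbb{C}\oplus\beta\mathbb{C}=(\alpha+\beta)\mathbb{C}$ for convex $\mathbb{C}$ and $\alpha,\beta\ge 0$ — this convexity fact is the one place the argument genuinely needs $\mathbb{W}$ convex, and is the main thing to state carefully.

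Finally, for $\bigoplus_{i=0}^{\infty}\Phi^i\mathbb{W}\subseteq\mathbb{S}$: group the infinite sum in blocks of length $r$ and use (i) iteratively, $\Phi^{jr}\mathbb{W}\subseteq\rho^j\mathbb{W}$ (which follows from $\Phi^r\mathbb{W}\subseteq\rho\mathbb{W}$ and linearity), so $\bigoplus_{i=0}^\infty\Phi^i\mathbb{W}\subseteq\bigoplus_{j=0}^\infty\rho^j\bigl(\bigoplus_{i=0}^{r-1}\Phi^i\mathbb{W}\bigr)=\tfrac{1}{1-\rho}\bigoplus_{i=0}^{r-1}\Phi^i\mathbb{W}=\mathbb{S}$, where convergence of the geometric series of sets is justified by compactness and $\rho<1$. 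The inclusion $\bigoplus_{i=0}^\infty\Phi^i\mathbb{W}\subseteq\mathbb{S}$ together with $0\in\mathbb{W}$ also re-proves invariance, and shows $\mathbb{S}$ contains the minimal RPI set. I expect the main obstacle to be purely expository: making the Minkowski-sum manipulations rigorous (distributivity, the scalar-addition identity for convex sets, and convergence of the infinite Minkowski series) rather than any conceptual difficulty, since the result is quoted from \cite{rakovic05}.
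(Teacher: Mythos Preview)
The paper does not prove this lemma; it is stated as a direct citation of \cite[Theorem~1]{rakovic05} and used without argument. Your proposal is correct and is essentially the classical Rakovi\'c--Kerrigan--Kouramas--Mayne construction that the citation points to: existence of $(r,\rho)$ from strict stability plus $0\in\mathrm{int}\,\mathbb{W}$, the RPI inclusion via the telescoping Minkowski-sum computation using $\alpha\mathbb{W}\oplus\beta\mathbb{W}=(\alpha+\beta)\mathbb{W}$ for convex $\mathbb{W}$, and the containment of the minimal RPI set by blocking the infinite sum in groups of $r$ and summing the geometric series, so there is nothing to compare against and nothing to fix.
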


The sequence $\{\mathbb{S},\mathbb{S},\ldots\}$ provides a bound on $\{e_{0|k},e_{1|k},\ldots\}$, and is referred to as a \emph{rigid tube}, which further helps to define a vector of constraint tightening parameters as 
\begin{align}\label{Eq:hs}
h_s=\max_{e\in \mathbb{S}} (F+GK)e.
\end{align}
In \eqref{Eq:hs}, the maximisation is performed for each row of $F+GK$. This is then used to reformulate the mixed constraint as a deterministic constraint. Now we can formulate the rigid tube MPC optimisation problem to be solved at time step $k$ as
\begin{equation}\label{Eq:RigTubeMPCOPT}
{\rm OPT}(\mathbb{S}, h_s,\nu_s): 	\begin{cases}
\min\limits_{s_{0|k},\bm{c}_k}\quad \|s_{0|k}\|^2_{P_x}+\|\bm{c}_k\|^2_{P_c}   \\
{\rm s.t}: x_k-s_{0|k}\in \mathbb{S}, \
\bar{F}\Psi^i \!\begin{bmatrix}
        s_{0|k} \\
        \bm{c}_k
    \end{bmatrix}\!\!\leq\! \bm{1}-h_s, 	 \forall i\in\mathbb{N}_{[0,\nu_s]},
\end{cases}
\end{equation}
where $\bar{F}=\begin{bmatrix}
F+GK & GE
\end{bmatrix}$, $\Psi=\begin{bmatrix}
\Phi & BE \\ \
\bm{0} & M
\end{bmatrix}$,  $E=\begin{bmatrix}
I_{n_u} & \bm{0} & \cdots & \bm{0}
\end{bmatrix}$, and $M$ is the block-upshift operator. Problem \eqref{Eq:RigTubeMPCOPT} is necessarily recursively feasible if $\nu_s$ is chosen as the smallest positive integer such that $\bar{F}\Psi^{\nu_s+1}z \leq \mathbf{1}-h_s$ for all $z$ satisfying $\bar{F}\Psi^i z\leq \mathbf{1}-h_s$, $i\in \mathbb{N}_{[0,\nu_s]}$.

\subsection{Problem under Study}
In practice, it is challenging to determine exact bounds on the disturbance $w_k$. To ensure robustness, we usually tailor a conservative set $\mathbb{W}$ to contain all possible realisations of $w_k$. Such a set, however, may lead to a small feasible region or even infeasibility of problem~\eqref{Eq:RigTubeMPCOPT}. Under Assumption~\ref{Ass}, $\mathbb{W}$ is an \emph{a priori} known yet conservative set for $w_k$, whereas
$\mathbb{W}_{\rm true}$
is an exact yet unknown set that tightly contains all possible realisations of $w_k$, and we have $\mathbb{W}_{\rm true} \subseteq \mathbb{W}$. These conditions motivate us to learn the set  $\mathbb{W}_{\rm true}$ and then use the learned disturbance set, denoted by $\hat{\mathbb{W}}_k^{\star}$, to reduce the conservativeness of the rigid tube formulation.
More specifically, our aim is two-fold: (1)  characterisation of the gap between $\mathbb{W}_{\rm true}$ and $\hat{\mathbb{W}}_k^\star$; (2) analysis of the rigid tube MPC algorithm based on $\hat{\mathbb{W}}_k^\star$.

\section{Learning Uncertainty Set}\label{section:Uncertainty Quantification}
In this section, we approximate the  set $\mathbb{W}_{\rm true}$ online using the set $\mathbb{W}$  and the collected disturbance realisations. We first define the initial disturbance information set that contains the disturbance samples collected offline as $\mathcal{I}^w_{0}=\{w^s_{i}, \ i=-N_{\rm off}, \cdots, -1\}$, where $N_{\rm off}\in\mathbb{N}$. As states can be measured exactly, we can recover a new disturbance sample at  time step $k+1$ by $w^s_{\rm new}=x_{k+1}-Ax_k-Bu_k$ and update the  disturbance information set as 
\begin{align}\label{Eq:Iwdynamics}		\mathcal{I}^w_{k+1}=\mathcal{I}^w_{k}\cup \{w^s_{\rm new}\}. 
\end{align}

It follows that $\mathbb{W}_{\rm true}$ is approximated by computing the minimum set in the form of 
\begin{align}\label{Eq:Wva}
\mathcal{W}(v_k,\alpha_k):=(1-\alpha_k)v_k\oplus \alpha_k \mathbb{W},
\end{align} 
which contains all samples in $\mathcal{I}^w_{k}$. The design parameters are $v_k\in \mathbb{W}$ and $\alpha_k\in [0,1]$. 
Suppose Assumption~\ref{Ass} holds, it is shown in \cite{gao2021invariant} that $\mathcal{W}(v,\alpha)$ defined in \eqref{Eq:Wva} satisfies: (i) $\mathcal{W}(v,\alpha)\subseteq \mathbb{W}$, for all $v\in \mathbb{W}$ and $\alpha\in [0,1]$; (ii) $\mathcal{W}(v,\alpha_1)\subseteq \mathcal{W}(v,\alpha_2)$, for all $v\in \mathbb{W}$ and $
0\leq \alpha_1\leq \alpha_2\leq 1$. 
Given these properties, we formulate the following optimisation problem to minimise the set $\mathcal{W}(v_k,\alpha_k)$ while containing all samples in $\mathcal{I}^w_{k}$: 
\begin{eqnarray}	
\begin{cases}
    &\min\limits_{v,\alpha}\quad  \alpha   \\
    & \hspace{0.1cm}{\rm s.t}: w^s\in \mathcal{W}(v,\alpha), \forall w^s\in \mathcal{I}^w_{k}, \\ & \hspace{0.8cm} v\in \mathbb{W}, \	\alpha\in [0,1],
\end{cases} \Longleftrightarrow 
\begin{cases}
    &\min\limits_{v,\alpha}\quad  \alpha   \\
    & \hspace{0.1cm}{\rm s.t}: -(1-\alpha)V v\leq \alpha \bm{1}-Vw^s, \forall w^s\in \mathcal{I}^w_{k}, \\ 
    & \hspace{0.8cm} Vv\leq \bm{1}, \ \alpha\in [0,1]. 
    \end{cases}\label{Opt2:quanset}
\end{eqnarray}  

Proposition~\ref{prop: LP equivalence} shows that the nonconvex problem \eqref{Opt2:quanset} can be reformulated as an LP. 
\begin{proposition}\label{prop: LP equivalence}
Under Assumption~\ref{Ass}, the optimal solution to the problem \eqref{Opt2:quanset}  can be obtained by solving the following LP:
\begin{eqnarray}	\label{Opt3:quansetLP}
   \max\limits_{y,\beta} ~\beta   \ \ 
       {\rm s.t}: -V y\leq (1-\beta)\bm{1}-Vw^s, \forall w^s\in \mathcal{I}^w_{k}, \ Vy \leq \beta\bm{1}, \ 	\beta\in [0,1]. 
\end{eqnarray}  
\end{proposition}
\begin{proof}
Replacing $\alpha$ and $v$ in \eqref{Opt2:quanset} with $\beta = 1-\alpha$ and  $y =\beta v$ directly yields \eqref{Opt3:quansetLP}.
\end{proof}

Denote by $(y^{\star}_k,\beta^{\star}_k)$  the optimiser to \eqref{Opt3:quansetLP}. Let $\alpha_k^{\star}=1-\beta^{\star}_k$ and $v_k^{\star}=y^{\star}_k/\beta^{\star}_k$ if $\beta^{\star}_k>0$. Then the quantified disturbance set at time step $k$ is given by
\begin{eqnarray}  \label{Eq:optWtrue}
\hat{\mathbb{W}}^{\star}_k=	\mathcal{W}(v^{\star}_k,\alpha_k^{\star}).
\end{eqnarray}  
Note that when $\beta^{\star}_k=0$ and $\alpha_k^{\star}=1$, $\hat{\mathbb{W}}^{\star}_k=\mathbb{W}$ regardless of the choice of $v_k\in \mathbb{W}$. Next we provide a statistical gap between $\hat{\mathbb{W}}^{\star}_k$ and $\mathbb{W}_{\rm true}$. 

\begin{theorem}\label{Theo: risk bound}
Suppose Assumption  \ref{Ass} holds. Given   $\epsilon, \gamma \in (0,1)$ and Euler’s  number $\rm{e}$, if $|\mathcal{I}^w_k|\geq \frac{1}{\epsilon} \frac{\rm{e}}{\rm{e}-1} \Bigl(\ln\frac{1}{\gamma} + n_x \Bigr)$, then ${\rm Pr}[w^r\in \mathbb{W}_{\rm true}: w^r\notin \hat{\mathbb{W}}^{\star}_k]\leq  \epsilon$ is satisfied with probability no less than $1-\gamma$. 
\end{theorem}
\begin{proof}
We first define the following robust LP:
\begin{eqnarray*}	\label{Opt3:robustquansetLP}
        &&\max\limits_{y,\beta}~ \beta   \ \ \hspace{0.1cm}{\rm s.t}: -V y\leq (1-\beta)\bm{1}-Vw^s, \forall w^s\in \mathbb{W}_{\rm true}, \ Vy \leq \beta\bm{1}, \ \beta\in [0,1]. 
\end{eqnarray*} 
Problem \eqref{Opt3:quansetLP} is its corresponding scenario LP, where  $\mathcal{I}^w_{k}$ is the set of i.i.d. samples from the convex uncertainty set $\mathbb{W}_{\rm true}$. Therefore, \cite[Theorem 4]{5531078} yields the stated sample complexity and the confidence guarantee in Theorem~\ref{Theo: risk bound}. 
\end{proof}

Given the prescribed $\epsilon$ and $\gamma$, one can determine the size of the initial disturbance information set $\mathcal{I}^w_{0}$ according to Theorem~\ref{Theo: risk bound}.

\subsection{Online Update of $\hat{\mathbb{W}}^{\star}_{k}$}
Note that the computation complexity of \eqref{Opt3:quansetLP} increases with the update of set $\mathcal{I}^w_{k}$. To mitigate this, we can online update the set 	$\hat{\mathbb{W}}^{\star}_{k}=	\mathcal{W}(v^{\star}_{k},\alpha_{k}^{\star})$ as follows. If the disturbance realisation $w_{k-1} (=x_{k}-Ax_{k-1}-Bu_{k-1})$ collected at time step $k$ is in the set $\hat{\mathbb{W}}^{\star}_{k-1}$, let $(v^{\star}_{k},\alpha_{k}^{\star})=(v^{\star}_{k-1},\alpha_{k-1}^{\star})$; otherwise, we solve the following problem:
\begin{eqnarray}	\label{Opt:onlinequanset}
(v^{\star}_{k},\alpha_{k}^{\star})=\argmin\limits_{v,\alpha}\{\alpha \ | \ \hat{\mathbb{W}}^{\star}_{k-1}\subseteq	\mathcal{W}(v,\alpha),w_{k-1}\in \mathcal{W}(v,\alpha),v\in \mathbb{W}, \ \alpha\in [0,1]\} .
\end{eqnarray} 

\begin{proposition}
The optimal solution to \eqref{Opt:onlinequanset} can be obtained by solving the following LP:
\begin{equation}
\label{Opt:onlinequansetLP}
\begin{cases}
    &\max\limits_{y,\beta}\quad  \beta   \\
    & \hspace{0.1cm}{\rm s.t}:
    \begin{cases}
        -V y\leq (1-\beta-\alpha^{\star}_{k-1})\bm{1}-(1-\alpha^{\star}_{k-1})Vv_{k-1}^{\star}, \\ 
        -V y\leq (1-\beta)\bm{1}-Vw_{k-1}, \ Vy \leq \beta\bm{1}, \ \beta\in [0,1].
    \end{cases}
\end{cases}
\end{equation}	
\end{proposition}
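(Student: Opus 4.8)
The plan is to mirror the proof of the preceding proposition: first rewrite each of the three constraints in \eqref{Opt:onlinequanset} as linear inequalities in $(v,\alpha)$, and then eliminate the resulting bilinear products through the change of variables $\beta=1-\alpha$, $y=\beta v=(1-\alpha)v$, under which the objective $\min\alpha$ turns into $\max\beta$. Throughout I would use that, since $\mathbb{W}=\{w:Vw\le\bm 1\}$ is convex and compact with $\bm 0$ in its interior, $\mathcal{W}(v,\alpha)=(1-\alpha)v\oplus\alpha\mathbb{W}=\{x:V(x-(1-\alpha)v)\le\alpha\bm 1\}$ for every $\alpha\in[0,1]$, the case $\alpha=0$ relying on the fact that the recession cone $\{z:Vz\le\bm 0\}$ of $\mathbb{W}$ is $\{\bm 0\}$.

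The easy constraints come first. From the $H$-representation above, $w_{k-1}\in\mathcal{W}(v,\alpha)$ is exactly $-(1-\alpha)Vv\le\alpha\bm 1-Vw_{k-1}$, which after the substitution is the second inequality of \eqref{Opt:onlinequansetLP}; and $v\in\mathbb{W}$, i.e.\ $Vv\le\bm 1$, multiplied by $\beta=1-\alpha\ge 0$ gives $Vy\le\beta\bm 1$, the third inequality. This is the same manipulation already used to pass from \eqref{Opt2:quanset} to \eqref{Opt3:quansetLP}.

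The substantive step is the set inclusion $\hat{\mathbb{W}}^*_{k-1}\subseteq\mathcal{W}(v,\alpha)$ with $\hat{\mathbb{W}}^*_{k-1}=\mathcal{W}(v^*_{k-1},\alpha^*_{k-1})=(1-\alpha^*_{k-1})v^*_{k-1}\oplus\alpha^*_{k-1}\mathbb{W}$, since both sides are homothets of $\mathbb{W}$. Here I would invoke (and, if needed, re-derive via support functions, cf.\ \cite{gao2021invariant}) the characterisation that for $\mathbb{W}$ convex and compact with $\bm 0$ in its interior and scalars $\lambda_1,\lambda_2\ge 0$, one has $c_1\oplus\lambda_1\mathbb{W}\subseteq c_2\oplus\lambda_2\mathbb{W}$ if and only if $\lambda_1\le\lambda_2$ and $c_1-c_2\in(\lambda_2-\lambda_1)\mathbb{W}$. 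Applying this with $c_1=(1-\alpha^*_{k-1})v^*_{k-1}$, $\lambda_1=\alpha^*_{k-1}$, $c_2=(1-\alpha)v$, $\lambda_2=\alpha$ yields two facts: (a) every feasible $(v,\alpha)$ satisfies $\alpha\ge\alpha^*_{k-1}$, so the inclusion already bounds $\beta=1-\alpha$ from above by $1-\alpha^*_{k-1}$; and (b) the inclusion is equivalent to the linear condition $V\big((1-\alpha^*_{k-1})v^*_{k-1}-(1-\alpha)v\big)\le(\alpha-\alpha^*_{k-1})\bm 1$, which after substitution becomes exactly the first inequality of \eqref{Opt:onlinequansetLP}.

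Finally I would close the loop on the change of variables as before: $(v,\alpha)\mapsto((1-\alpha)v,\,1-\alpha)$ sends feasible points of \eqref{Opt:onlinequanset} to feasible points of \eqref{Opt:onlinequansetLP} with the same value of $\beta$, and conversely $(y,\beta)\mapsto(y/\beta,\,1-\beta)$ does so whenever $\beta>0$, while $\beta=0$ corresponds to $\alpha=1$, $\hat{\mathbb{W}}^*_k=\mathbb{W}$ and an arbitrary $v\in\mathbb{W}$, consistent with the convention used after \eqref{Eq:optWtrue}. The one point that needs care is that \eqref{Opt:onlinequansetLP} introduces no spurious feasible points with $\beta>1-\alpha^*_{k-1}$: writing its first inequality as $Vz\le(1-\beta-\alpha^*_{k-1})\bm 1$ with $z=(1-\alpha^*_{k-1})v^*_{k-1}-y$, the right-hand side has a strictly negative component when $\beta>1-\alpha^*_{k-1}$, and since $\mathbb{W}$ is bounded the rows of $V$ positively span $\mathbb{R}^{n_x}$, so there is a nonzero $\mu\ge\bm 0$ with $\mu^{\!\top}V=\bm 0$; then $0=\mu^{\!\top}Vz\le(1-\beta-\alpha^*_{k-1})\,\mu^{\!\top}\bm 1<0$ is impossible, so the LP automatically respects $\beta\le 1-\alpha^*_{k-1}$, matching (a). I expect this bookkeeping around the degenerate cases $\beta=0$ and $\alpha^*_{k-1}\in\{0,1\}$, together with justifying the homothet-inclusion characterisation, to be the only genuine work; the remainder is the reformulation already carried out for \eqref{Opt3:quansetLP}.
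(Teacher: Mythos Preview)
Your proposal is correct and follows essentially the paper's route: rewrite each constraint of \eqref{Opt:onlinequanset} as a linear inequality in $(v,\alpha)$ and then apply the substitution $\beta=1-\alpha$, $y=\beta v$. The paper linearises the inclusion $\hat{\mathbb{W}}^*_{k-1}\subseteq\mathcal{W}(v,\alpha)$ slightly more directly---observing that both sets share the $H$-representation matrix $V$, so the inclusion is equivalent to the componentwise bound $\alpha^*_{k-1}\bm 1+(1-\alpha^*_{k-1})Vv^*_{k-1}\le\alpha\bm 1+(1-\alpha)Vv$ on the right-hand sides---but this is exactly the inequality you reach via the homothet-inclusion characterisation, and your additional bookkeeping on the degenerate case $\beta=0$ and the automatic bound $\beta\le 1-\alpha^*_{k-1}$ is sound (and more thorough than the paper's brief argument).
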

\begin{proof}
Since $\mathcal{W}(\alpha,v)=\{z\in \mathbb{R}^{n_x}\mid Vz \leq \alpha \bm{1}+(1-\alpha)Vv\}$, it follows that $	\hat{\mathbb{W}}^{\star}_{k-1}\subseteq	\mathcal{W}(v,\alpha)$ holds if and only if 
$\alpha_{k-1}^{\star} \bm{1}+(1-\alpha^{\star}_{k-1})Vv^{\star}_{k-1}\leq \alpha \bm{1}+(1-\alpha)Vv$. Then  \eqref{Opt:onlinequansetLP} is obtained using the same argument as the proof of Proposition~\ref{prop: LP equivalence}.
\end{proof}

\vspace{-0.6cm}
\section{Learning-based Rigid Tube MPC}\label{Sec:MPC}
In this section, we formulate the tube MPC problem using the quantified disturbance set $\hat{\mathbb{W}}^{\star}_k$. 
We first show how to efficiently construct the rigid tube for  the set   $\hat{\mathbb{W}}^{\star}_k$. 

\begin{proposition}\label{Prop:Quantifiedtube}
Suppose that Assumption~\ref{Ass} holds. 	Given the sets $\hat{\mathbb{W}}^{\star}_k=\mathcal{W}(v^{\star}_k,\alpha_k^{\star})$  in \eqref{Eq:optWtrue} and  $\mathbb{S}$ in \eqref{Eq:Wtube}, we can construct a set that satisfies $\Phi\hat{\mathbb{S}}^{\star}_k\oplus \hat{\mathbb{W}}_k^{\star}\subseteq \hat{\mathbb{S}}^{\star}_k$ via
\begin{align}\label{Eq:quantube}
    \hat{\mathbb{S}}^{\star}_k=\alpha^{\star}_k \mathbb{S}\oplus (1-\alpha^{\star}_k)(I-\Phi)^{-1}v^{\star}_k.
\end{align}
\end{proposition}
\begin{proof}
From Lemma~\ref{Lemma:tubeforW}, we have $\Phi \mathbb{S} \oplus \mathbb{W}\subseteq \mathbb{S}$. 
Since $\Phi$ is strictly stable and $I-\Phi$ is invertible,  we have
\begin{align*}
\Phi\hat{\mathbb{S}}^{\star}_k\oplus \mathcal{W}(v^{\star}_k,\alpha_k^{\star})&=\alpha_k^{\star}\Phi \mathbb{S}\oplus(1-\alpha_k^{\star}) \Phi(I-\Phi)^{-1}v^{\star}_k \oplus \alpha_k^{\star} \mathbb{W}\oplus (1-\alpha_k^{\star})v^{\star}_k\\
    &= \alpha_k^{\star} (\Phi \mathbb{S}\oplus \mathbb{W})\oplus (1-\alpha_k^{\star})(\Phi(I-\Phi)^{-1}+I)v^{\star}_k\\
    &= \alpha_k^{\star} (\Phi \mathbb{S}\oplus \mathbb{W})\oplus (1-\alpha_k^{\star})(I-\Phi)^{-1}v^{\star}_k \subseteq  \alpha_k^{\star} \mathbb{S}\oplus (1-\alpha_k^{\star})(I-\Phi)^{-1}v^{\star}_k,
\end{align*}  
\noindent which yields the expression in \eqref{Eq:quantube}.
\end{proof}

Proposition~\ref{Prop:Quantifiedtube} implies that the  rigid tube $\{\hat{\mathbb{S}}^{\star}_k,\hat{\mathbb{S}}^{\star}_k,\ldots\}$ for the set $\hat{\mathbb{W}}^{\star}_k$ can be directly constructed using $\{\mathbb{S},\mathbb{S},\ldots\}$. This provides significant computational advantages since we do not need to recompute the rigid tube using \eqref{Eq:Wtube} when the set $\hat{\mathbb{W}}^{\star}_k$ is updated online. Based on  $\hat{\mathbb{S}}^{\star}_k$ in \eqref{Eq:quantube}, the corresponding tube MPC problem at time step $k$ can be formulated as
\begin{equation}\label{Eq:QuanRigTubeMPCOPT}
{\rm OPT}(\hat{\mathbb{S}}^{\star}_k,h_k^{\star},\nu_k): 	\begin{cases}
    \min\limits_{s_{0|k},\bm{c}_k}\quad \|s_{0|k}\|^2_{P_x}+\|\bm{c}_k\|^2_{P_c}   \\
    {\rm s.t}: x_k-s_{0|k}\in \hat{\mathbb{S}}^{\star}_k, \
        \bar{F}\Psi^i \begin{bmatrix}
            s_{0|k} \\
            \bm{c}_k
        \end{bmatrix}\leq \bm{1}-h^{\star}_k, 	\  \forall i\in\mathbb{N}_{[0,\nu_k]},
\end{cases}
\end{equation}
where $h^{\star}_k$ is defined as
\begin{equation}\label{Eq:hk}
h^{\star}_{k}=\max_{e\in\hat{ \mathbb{S}}_k^{\star}} (F+GK)e,    
\end{equation}
and $\nu_k$  is a  positive integer such that 
$\bar{F}\Psi^{\nu_k+1}z \leq \mathbf{1}-h^{\star}_k$ for all $z$ satisfying $\bar{F}\Psi^i z\leq \mathbf{1}-h^{\star}_k$, $i\in \mathbb{N}_{[0,\nu_k]}$. 

\subsection{Computing $\nu_k$}
We next consider how to efficiently compute $\nu_k$ in problem  \eqref{Eq:QuanRigTubeMPCOPT}. 
Let $\Omega(q,\nu)=\{z\in \mathbb{R}^{n_x+N n_u}\mid \bar{F}\Psi^{i}z \leq q, \
i \in \mathbb{N}_{[0,\nu]}
\}$, where $q\in  \mathbb{R}^{n_c}$ and $\nu\in \mathbb{N}$.  Let $\nu_s$ be such that 	\begin{equation}\label{Eq:nus}
\max_{z\in \Omega(1-h_s,\nu_s)}\bar{F}\Psi^{\nu_s+1}z \leq \bm{1}-h_s
\end{equation} 
and $P_s=P_s^T\in \mathbb{R}^{(n_x+N n_u)\times (n_x+N n_u)}$, $P_s\succ 0$ such that 
\begin{equation}\label{Eq:Ps}
\{z\in \mathbb{R}^{n_x+N n_u}\mid z^TP_sz\leq 1\} \supseteq \Omega(\bm{1}-h_s,\nu_s).
\end{equation}
At time step $k$, we want to find $\nu_k$ such that $\max_{z\in \Omega(1-h^{\star}_k,\nu_k)}\bar{F}\Psi^{\nu_k+1}z\leq \bm{1}-h_k^{\star}.$ Algorithm~\ref{Alg:Comnuk} provides a procedure to compute such $\nu_k$.  

\begin{algorithm}\footnotesize
\caption{Computation of $\nu_k$.} 
\begin{algorithmic}[1]
    \REQUIRE $\mathbb{S}$ in \eqref{Eq:Wtube},  $h_s$ in \eqref{Eq:hs},  $\nu_s$ in \eqref{Eq:nus}, and $P_s$ in \eqref{Eq:Ps}.
    \STATE Compute $h_k^{\star}$ in \eqref{Eq:hk}
    \STATE Let $\zeta_k=\max\limits_{i}[\bm{1}-h_k^*]_i/[\bm{1}-h_s]_{i}$ and $\nu=\nu_s$
    \WHILE{$\max\limits_{i} [\bar{F}]_i\Psi^{\nu+1}P_s^{-1}{\Psi^{\nu+1}}^T[\bar{F}]^T_i- [1-h^{\star}_k]_i/\zeta^2_k>0$} 
        {\STATE Let $\nu=\nu+1$}
    \ENDWHILE 
        \RETURN $\nu_k = \nu$
\end{algorithmic} \label{Alg:Comnuk} 
\end{algorithm}

\vspace{-0.5cm}
\begin{proposition}
If $h_s< \bm{1}$ in~(\ref{Eq:hs}),  
the integer $\nu_k$ obtained from Algorithm~\ref{Alg:Comnuk} necessarily satisfies 
$\max_{z\in \Omega(1-h^{\star}_k,\nu_k)}\bar{F}\Psi^{\nu_k+1}z\leq \bm{1}-h_k^{\star}$.
\end{proposition}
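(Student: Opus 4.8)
The plan is to reduce the desired inequality to the ellipsoidal outer bound encoded by $P_s$, exploiting the fact that the time‑varying tube $\hat{\mathbb{S}}^*_k$ can never be larger than $\mathbb{S}$. The first step is to show $h^*_k \le h_s$ componentwise, which then guarantees $\bm{1}-h^*_k \ge \bm{1}-h_s > \bm{0}$ and hence that $\zeta_k = \max_i [\bm{1}-h^*_k]_i/[\bm{1}-h_s]_i$ is a well‑defined finite scalar with $\zeta_k \ge 1$. To this end I would note that $(I-\Phi)^{-1}v^*_k = \sum_{j=0}^{\infty}\Phi^j v^*_k$; since $v^*_k \in \mathbb{W}$, every partial sum lies in $\bigoplus_{j=0}^{m}\Phi^j\mathbb{W} \subseteq \mathbb{S}$ by Lemma~\ref{Lemma:tubeforW}, and closedness of $\mathbb{S}$ gives $(I-\Phi)^{-1}v^*_k \in \mathbb{S}$. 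Convexity of $\mathbb{S}$ then yields $\hat{\mathbb{S}}^*_k = \alpha^*_k\mathbb{S}\oplus(1-\alpha^*_k)(I-\Phi)^{-1}v^*_k \subseteq \mathbb{S}$, so $h^*_k = \max_{e\in\hat{\mathbb{S}}^*_k}(F+GK)e \le \max_{e\in\mathbb{S}}(F+GK)e = h_s$.

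The second step uses three elementary facts about $\Omega(q,\nu)$: it is nondecreasing in $q$, positively homogeneous in $q$ (i.e. $\Omega(\lambda q,\nu)=\lambda\,\Omega(q,\nu)$ for $\lambda>0$), and nonincreasing in $\nu$. By the definition of $\zeta_k$ we have $\bm{1}-h^*_k \le \zeta_k(\bm{1}-h_s)$, so for every $\nu \ge \nu_s$,
\[
\Omega(\bm{1}-h^*_k,\nu) \subseteq \zeta_k\,\Omega(\bm{1}-h_s,\nu) \subseteq \zeta_k\,\Omega(\bm{1}-h_s,\nu_s) \subseteq \zeta_k\{z : z^T P_s z \le 1\} = \{z : z^T P_s z \le \zeta_k^2\},
\]
the penultimate inclusion being \eqref{Eq:Ps}. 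Since the loop in Algorithm~\ref{Alg:Comnuk} starts at $\nu_s$ and only increments, the returned $\nu_k$ satisfies $\nu_k \ge \nu_s$, so this containment applies with $\nu=\nu_k$.

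The third step is termination together with the closing estimate. Because $\Phi$ is strictly stable and $M$ is nilpotent, $\Psi$ has spectral radius strictly less than one, hence $\Psi^{\nu+1}\to\bm{0}$ and $g_i(\nu):=[\bar{F}]_i\Psi^{\nu+1}P_s^{-1}{\Psi^{\nu+1}}^T[\bar{F}]_i^T\to 0$ for each row $i$; as the thresholds are fixed positive numbers, the while loop exits after finitely many increments, returning $\nu_k$ with $g_i(\nu_k) \le ([\bm{1}-h^*_k]_i)^2/\zeta_k^2$ for all $i$. Combining this with the containment from the second step and the ellipsoid support‑function identity $\max_{z^T P_s z \le \zeta_k^2} a^T z = \zeta_k\sqrt{a^T P_s^{-1}a}$, applied with $a^T = [\bar{F}]_i\Psi^{\nu_k+1}$, gives
\[
\max_{z\in\Omega(\bm{1}-h^*_k,\nu_k)}[\bar{F}]_i\Psi^{\nu_k+1}z \le \zeta_k\sqrt{g_i(\nu_k)} \le \zeta_k\cdot\frac{[\bm{1}-h^*_k]_i}{\zeta_k} = [\bm{1}-h^*_k]_i,
\]
where the last inequality uses $[\bm{1}-h^*_k]_i>0$. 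Stacking over all rows $i$ yields $\max_{z\in\Omega(\bm{1}-h^*_k,\nu_k)}\bar{F}\Psi^{\nu_k+1}z \le \bm{1}-h^*_k$, as claimed.

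I expect the main obstacle to be the first step: establishing $h^*_k\le h_s$ (and thereby the well‑posedness and sign of $\zeta_k$) is what legitimises replacing $\Omega(\bm{1}-h^*_k,\nu_k)$ by a $P_s$‑ellipsoid, and it relies on the non‑obvious fact that the translation $(I-\Phi)^{-1}v^*_k$ stays inside $\mathbb{S}$. The rest is careful bookkeeping — tracking the scaling factor $\zeta_k$ consistently through the homogeneity argument and the support‑function bound so that the algebraic exit test in Algorithm~\ref{Alg:Comnuk} is precisely the inequality needed to close the estimate.
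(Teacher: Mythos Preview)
Your proof follows essentially the same route as the paper's: show $\hat{\mathbb{S}}^*_k\subseteq\mathbb{S}$ via $(I-\Phi)^{-1}v^*_k\in\mathbb{S}$ to obtain $h^*_k\le h_s$, then use homogeneity and monotonicity of $\Omega$ together with \eqref{Eq:Ps} to embed $\Omega(\bm{1}-h^*_k,\nu)$ in the $\zeta_k$-scaled $P_s$-ellipsoid for all $\nu\ge\nu_s$, and close with the ellipsoid support function. Two minor differences worth flagging: you add a termination argument (which the paper omits), and your stated exit condition $g_i(\nu_k)\le([\bm{1}-h^*_k]_i)^2/\zeta_k^2$ differs from the threshold $[\bm{1}-h^*_k]_i/\zeta_k^2$ written in Algorithm~\ref{Alg:Comnuk} and in the paper's own proof---your squared version is the one that is actually equivalent to $\max_{z^TP_sz\le\zeta_k^2}[\bar F]_i\Psi^{\nu_k+1}z\le[\bm{1}-h^*_k]_i$, so this reads as a tacit correction of a typo in the paper rather than an error on your part.
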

\begin{proof}
First, since $\bigoplus_{i=0}^{\infty}\Phi^i\mathbb{W}\subseteq\mathbb{S}$ and    $v^{\star}_k\in \mathbb{W}$, we have $(I-\Phi)^{-1}v^{\star}_k\in \mathbb{S}$. 
From the definition of $\hat{\mathbb{S}}^{\star}_k$ in \eqref{Eq:quantube}, it follows that $\hat{\mathbb{S}}^{\star}_k\subseteq \mathbb{S}$.
By the definitions of $h_s$ in \eqref{Eq:hs} and $h^{\star}_k$ in \eqref{Eq:hk}, it holds that $h_k^{\star}\leq h_s$. Let $\zeta_k=\max_{i}[\bm{1}-h_k^{\star}]_i/[\bm{1}-h_s]_{i}$. Then, if $h_s< \bm{1}$, we have $\zeta_k(\bm{1}-h_s)\geq (\bm{1}-h_k^{\star})\geq (\bm{1}-h_s)> \bm{0}$.   This implies that $\Omega\bigl(\zeta_k(\bm{1}-h_s),\nu\bigr)\supseteq\Omega(\bm{1}-h^{\star}_k,\nu)\supseteq \Omega(\bm{1}-h_s,\nu)$. Therefore, $\max_{z\in \Omega(1-h^{\star}_k,\nu)}\bar{F}\Psi^{\nu+1}z\leq \bm{1}-h_k^{\star}$ necessarily holds if $\nu$ satisfies $\max_{z\in \Omega(\zeta_k(\bm{1}-h_s),\nu)}\bar{F}\Psi^{\nu+1} z \leq 1-h_k^{\star}.$ But $\{z \mid z^TP_sz\leq 1\} \supseteq \Omega(\bm{1}-h_s,\nu_s)$, and hence $\{z \mid z^TP_sz\leq \zeta_k^2\} \supseteq \Omega(\zeta_k(\bm{1}-h_s),\nu_s)$. Furthermore, for any $\nu'\geq \nu$,  we have $\Omega(q,\nu) \supseteq \Omega(q,\nu')$. Thus it holds that $\{z \mid z^TP_sz\leq \zeta_k^2\} \supseteq \Omega(\zeta_k(\bm{1}-h_s),\nu_s)\supseteq  \Omega(\zeta_k(\bm{1}-h_s),\nu)$ for any $\nu\geq \nu_s$. This gives a pair of sufficient conditions to ensure $\max_{z\in \Omega(1-h^{\star}_k,\nu)}\bar{F}\Psi^{\nu+1}z\leq 1-h_k^{\star}$, that is, $\max_{z\in \{z \mid z^TP_sz\leq \zeta_k^2\}}\bar{F}\Psi^{\nu+1}z\leq \bm{1}-h^{\star}_k \text{ and } \nu\geq \nu_s.$ Equivalent conditions are  $[\bar{F}]_i\Psi^{\nu+1}P_s^{-1}\Psi^{\nu+1}[\bar{F}]^T_i\leq [1-h^{\star}_k]_i/\zeta^2_k$, $\forall i$,
and $\nu\geq\nu_s$.
\end{proof}

\vspace{-0.5cm}
\subsection{Recursive Feasibility}
From the statistical gap between  $\mathbb{W}_{\rm true}$ and $\hat{\mathbb{W}}^{\star}_k$ in Theorem~\ref{Theo: risk bound}, we have the following probabilistic recursive feasibility.    

\begin{proposition}
Suppose that at time step $k$, problem ${\rm OPT}(\hat{\mathbb{S}}^{\star}_k,h_k^{\star},\nu_k)$ is feasible. Then, with confidence no less than $1-\gamma$, problem ${\rm OPT}(\hat{\mathbb{S}}^{\star}_{k+1},h_{k+1}^{\star},\nu_{k+1})$ is feasible with probability at least $1-\epsilon$, where $\gamma$ and $\epsilon$ are defined in Theorem~\ref{Theo: risk bound}. 
\end{proposition}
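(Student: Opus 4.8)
The plan is to isolate the two sources of randomness in the claim: the confidence $1-\gamma$ refers to the draw of the disturbance information set $\mathcal{I}^w_k$ from which $\hat{\mathbb{W}}^*_k$ (and hence $\hat{\mathbb{S}}^*_k$, $h^*_k$, $\nu_k$) is built, while the probability $1-\epsilon$ refers to the single fresh realisation $w_k = x_{k+1}-Ax_k-Bu_k$, which by Assumption~\ref{Ass:Wtrue} and the indexing in \eqref{Eq:newW}--\eqref{Eq:Iwdynamics} is i.i.d.\ according to $\mathrm{Pr}$ and independent of $\mathcal{I}^w_k$. First I would invoke Theorem~\ref{Theo: risk bound} (whose sample-complexity condition \eqref{Eq:samplenum} we take to be in force at step $k$): with confidence at least $1-\gamma$ the event $\mathcal{E}:=\{\mathrm{Pr}[w^r\notin\hat{\mathbb{W}}^*_k]\le\epsilon\}$ holds, and since $\mathrm{supp}(\mathrm{Pr})=\mathbb{W}_{\rm true}$, on $\mathcal{E}$ we have $\mathrm{Pr}[w_k\in\hat{\mathbb{W}}^*_k]\ge 1-\epsilon$; because $w_k$ is independent of $\mathcal{I}^w_k$, this applies verbatim to the realised disturbance.

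Second I would show that on the event $\{w_k\in\hat{\mathbb{W}}^*_k\}$ the problem $\mathrm{OPT}(\hat{\mathbb{S}}^*_{k+1},h^*_{k+1},\nu_{k+1})$ is feasible. On that event the recursive update rule preceding \eqref{Opt:onlinequanset} keeps $(v^*_{k+1},\alpha^*_{k+1})=(v^*_k,\alpha^*_k)$, so $\hat{\mathbb{W}}^*_{k+1}=\hat{\mathbb{W}}^*_k$, $\hat{\mathbb{S}}^*_{k+1}=\hat{\mathbb{S}}^*_k$, $h^*_{k+1}=h^*_k$, and hence also $\nu_{k+1}=\nu_k$ (being a deterministic function of $h^*_k$ through Algorithm~\ref{Alg:Comnuk}). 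I then take the shifted candidate $s_{0|k+1}=\Phi s^*_{0|k}+Bc^*_{0|k}$, $\bm{c}_{k+1}=M\bm{c}^*_k$, i.e.\ $[\,s_{0|k+1}^{T}\ \bm{c}_{k+1}^{T}\,]^{T}=\Psi[\,(s^*_{0|k})^{T}\ (\bm{c}^*_k)^{T}\,]^{T}$. Using $u_k=Kx_k+c^*_{0|k}$ gives $x_{k+1}-s_{0|k+1}=\Phi(x_k-s^*_{0|k})+w_k\in\Phi\hat{\mathbb{S}}^*_k\oplus\hat{\mathbb{W}}^*_k\subseteq\hat{\mathbb{S}}^*_k=\hat{\mathbb{S}}^*_{k+1}$ by feasibility at step $k$ and Proposition~\ref{Prop:Quantifiedtube}, so the first constraint holds. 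For the tightened constraints, $\bar{F}\Psi^{i}[\,s_{0|k+1}^{T}\ \bm{c}_{k+1}^{T}\,]^{T}=\bar{F}\Psi^{i+1}[\,(s^*_{0|k})^{T}\ (\bm{c}^*_k)^{T}\,]^{T}\le\bm{1}-h^*_k=\bm{1}-h^*_{k+1}$ for $i\in\mathbb{N}_{[0,\nu_k-1]}$ directly from step-$k$ feasibility, and for $i=\nu_k=\nu_{k+1}$ it follows from the defining property of $\nu_k$, namely $\bar{F}\Psi^{\nu_k+1}z\le\bm{1}-h^*_k$ for every $z\in\Omega(\bm{1}-h^*_k,\nu_k)$, applied to the step-$k$ optimiser, which lies in $\Omega(\bm{1}-h^*_k,\nu_k)$ by feasibility.

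Combining, conditional on the confidence event $\mathcal{E}$ the feasibility of $\mathrm{OPT}(\hat{\mathbb{S}}^*_{k+1},h^*_{k+1},\nu_{k+1})$ is implied by $\{w_k\in\hat{\mathbb{W}}^*_k\}$, which has probability at least $1-\epsilon$; hence with confidence at least $1-\gamma$ the problem at step $k+1$ is feasible with probability at least $1-\epsilon$. I expect the delicate points to be purely organisational rather than technical: making precise that $w_k$ is a genuinely fresh sample independent of $\mathcal{I}^w_k$, so that Theorem~\ref{Theo: risk bound} may be applied to the indicator $\{w_k\notin\hat{\mathbb{W}}^*_k\}$, and keeping the "confidence'' layer (over $\mathcal{I}^w_k$) separated from the "probability'' layer (over $w_k$). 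It is worth noting explicitly that nothing is claimed on the complementary event $\{w_k\notin\hat{\mathbb{W}}^*_k\}$: there the LP \eqref{Opt:onlinequansetLP} enlarges $\hat{\mathbb{W}}^*_{k+1}\supseteq\hat{\mathbb{W}}^*_k$ and therefore tightens the constraints through $h^*_{k+1}\ge h^*_k$, so the shifted candidate need not remain feasible; this is precisely the $\epsilon$-probability exception and is not covered.
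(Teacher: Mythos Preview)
Your proposal is correct and follows the same approach as the paper's proof, which is extremely terse: it simply observes that $\mathrm{OPT}(\hat{\mathbb{S}}^*_{k+1},h^*_{k+1},\nu_{k+1})$ is feasible whenever $\hat{\mathbb{W}}^*_{k+1}=\hat{\mathbb{W}}^*_k$, that this holds if and only if $w_k\in\hat{\mathbb{W}}^*_k$, and then invokes Theorem~\ref{Theo: risk bound}. You have filled in the details the paper leaves implicit, namely the standard shifted-candidate construction $[\,s_{0|k+1}^{T}\ \bm{c}_{k+1}^{T}\,]^{T}=\Psi[\,(s^*_{0|k})^{T}\ (\bm{c}^*_k)^{T}\,]^{T}$ together with Proposition~\ref{Prop:Quantifiedtube} and the defining property of $\nu_k$, as well as the separation of the confidence layer (over $\mathcal{I}^w_k$) from the probability layer (over the fresh sample $w_k$).
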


\begin{proof}
Problem ${\rm OPT}(\hat{\mathbb{S}}^{\star}_{k+1},h_{k+1}^{\star},\nu_{k+1})$ is feasible if  $\hat{\mathbb{W}}^{\star}_{k+1}=\hat{\mathbb{W}}^{\star}_k$, which holds if and only if $w_k \in \hat{\mathbb{W}}^{\star}_k$.  The probabilistic recursive feasibility then follows from Theorem~\ref{Theo: risk bound}. 
\end{proof}
\vspace{-0.5cm}
\section{Case Study}\label{section:numerical}
\vspace{-0.3cm}
\begin{figure}[t]
\centering
\subfigure[\label{Fig: W_vs_W_true_50}]{\includegraphics[height=3.5cm]{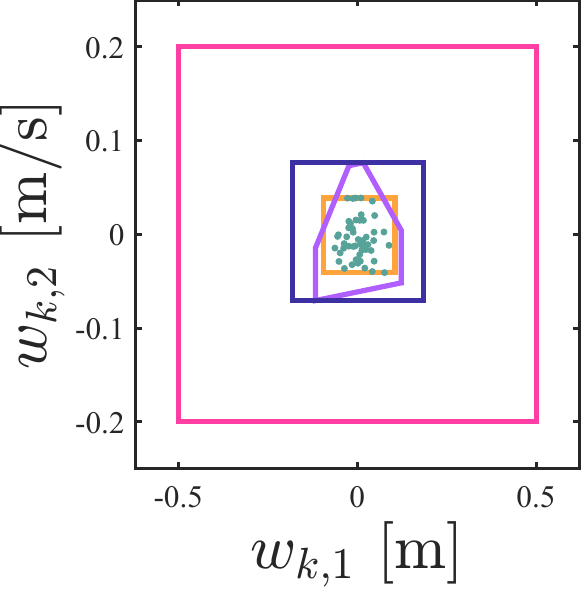}}
\subfigure[\label{Fig: F_vs_F_true_50}]{\includegraphics[height=3.5cm]{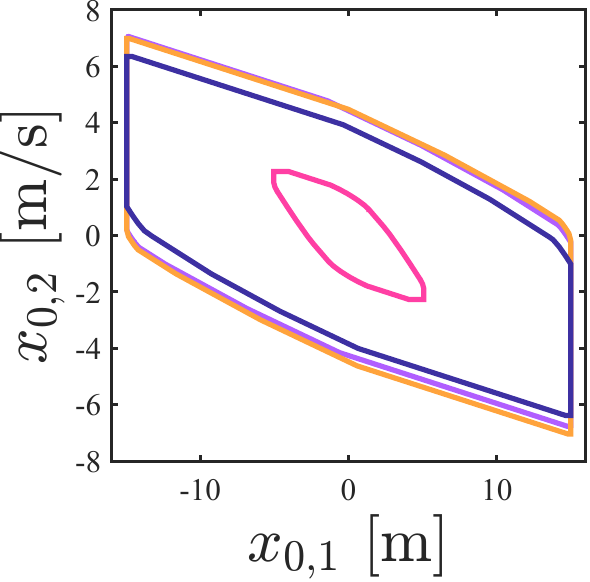}} 
\subfigure[\label{Fig: W_vs_W_true_20000}]{\includegraphics[height=3.5cm]{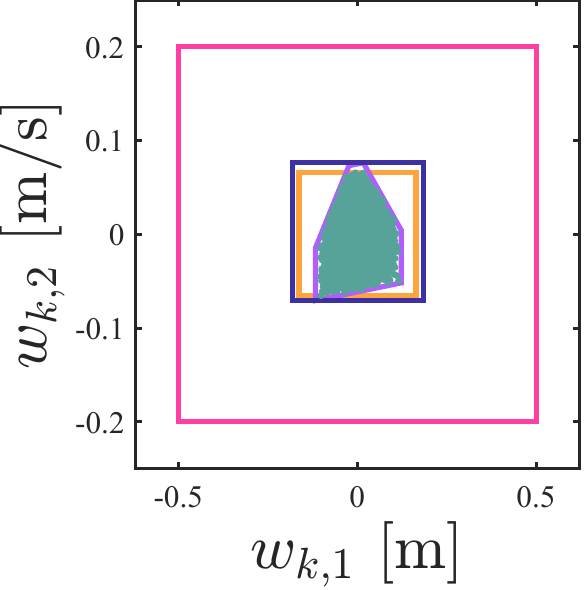}} 
\subfigure[\label{Fig: F_vs_F_true_20000}]{\includegraphics[height=3.5cm]{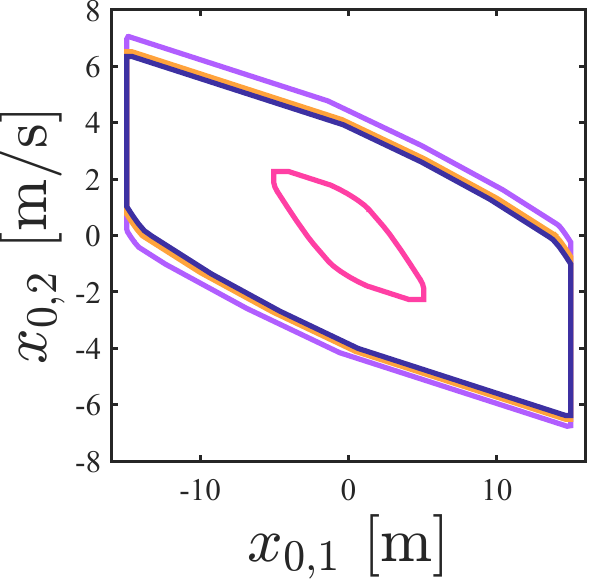}}
\caption{\small Comparison of disturbance sets  $\mathbb{W}$ [\textcolor{mycolorc1}{\rule[0.05cm]{0.75cm}{0.125em}}], $\mathbb{W}_{\rm true}$ [\textcolor{mycolorc3}{\rule[0.05cm]{0.75cm}{0.125em}}], $\hat{\mathbb{W}}_{\rm opt}$ [\textcolor{mycolorc4}{\rule[0.05cm]{0.75cm}{0.125em}}], and $\hat{\mathbb{W}}_0^{\star}$ [\textcolor{mycolorc2}{\rule[0.05cm]{0.75cm}{0.125em}}] and their corresponding feasible regions $\mathcal{F}_{\rm MPC}$ [\textcolor{mycolorc1}{\rule[0.05cm]{0.75cm}{0.125em}}], $\mathcal{F}_{\rm true}$ [\textcolor{mycolorc3}{\rule[0.05cm]{0.75cm}{0.125em}}], $\hat{\mathcal{F}}_{\rm opt}$ [\textcolor{mycolorc4}{\rule[0.05cm]{0.75cm}{0.125em}}], and $\hat{\mathcal{F}}_0$ [\textcolor{mycolorc2}{\rule[0.05cm]{0.75cm}{0.125em}}]. (a) The disturbance sets with $|\mathcal{I}_0^w| = 50$; (b) The feasible regions with $|\mathcal{I}_0^w| = 50$; (c) The disturbance sets with $|\mathcal{I}_0^w| = 20000$; (d) The feasible regions with $|\mathcal{I}_0^w| = 20000$.}
\label{Fig: F_vs_F_hat}
\end{figure}
\begin{figure}
\centering
\subfigure{\includegraphics[height=3.5cm]{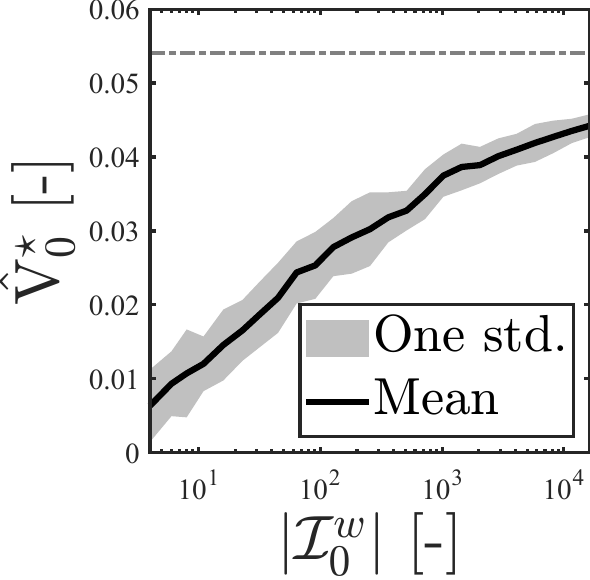}}
\subfigure{\includegraphics[height=3.5cm]{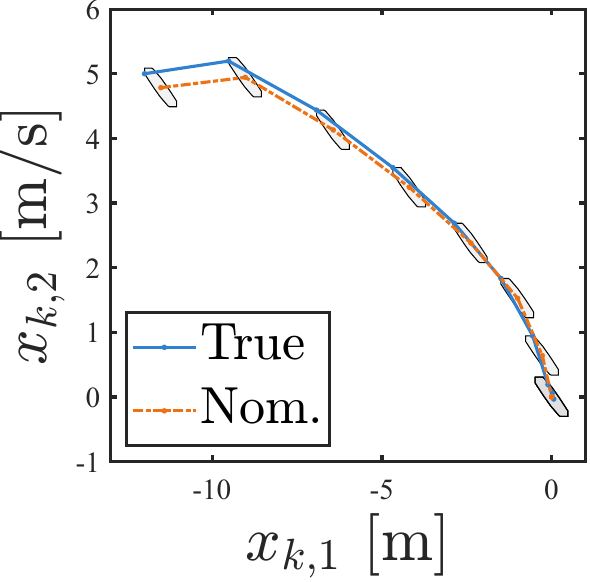}}
\subfigure{\includegraphics[height=3.5cm]{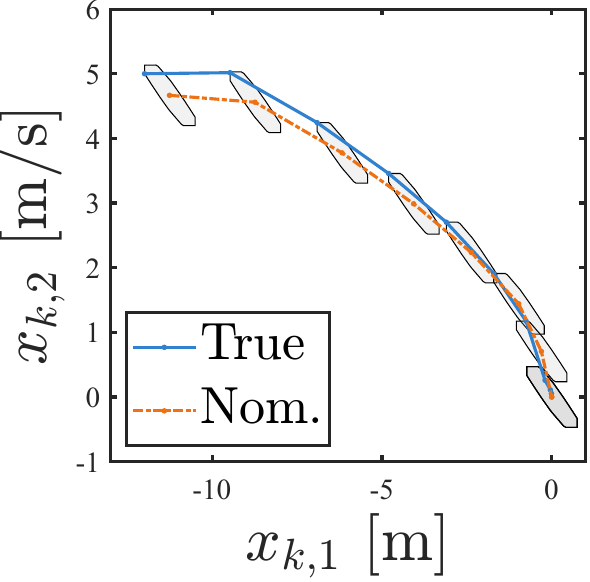}}
\subfigure{\includegraphics[height=3.5cm]{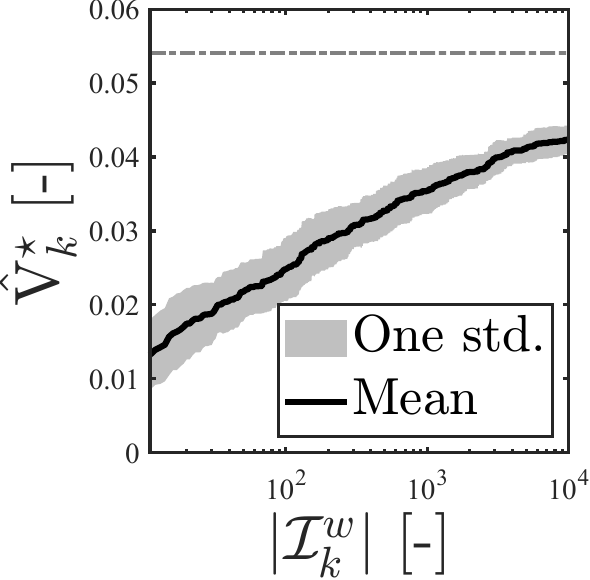}}
\caption{\small (a) Volume of $\hat{\mathbb{W}}_0^{\star}$ for MC simulations for different $\mathcal{I}^w_0$ (std. means standard deviation); (b) State trajectory by UQ-RMPC with $|\mathcal{I}_0^w|=100$; (c) State trajectory by UQ-RMPC with $|\mathcal{I}_0^w|=20000$; (d)Volume of $\hat{\mathbb{W}}_k^{\star}$ for MC simulations with $\mathcal{I}^w_k$.}
\label{Fig:onlinetraj}
\end{figure}
In this section, we consider a car-following example.  An ego autonomous vehicle (EV) tries to follow the longitudinal motion of a leading vehicle (LV) and keep the inter-vehicular distance as close to a pre-specified distance as possible. Dynamics of the EV and LV can be described by linear time-invariant models with superscripts $e$ and $l$ respectively, i.e., $
x_{k+1}^e = Ax_{k}^e + Bu_{k}^e + \xi_{k}^e, \
x_{k+1}^l = Ax_{i|k}^l + Bu_{k}^l+ \xi_{k}^l$, where $x_{k}^e = [p_{k}^e \ v_{k}^e]^{\rm T}$, $x_{k}^l = [p_{k}^l \ v_{k}^l]^{\rm T}$, $u_{k}^e=a_{k}^e \in \mathbb{R}$, $u_{k}^l=a_{k}^l \in \mathbb{R}$. The system and input matrices are $A = [\begin{smallmatrix} 1 & T\\ 0 & 1\end{smallmatrix}]$ and $B = [\begin{smallmatrix} 0 \\ T\end{smallmatrix}]$, and $T$ is a sampling interval. Here $p_{k}^e$, $p_{k}^l$ are the longitudinal positions of the EV and LV at time step $k$, respectively; $v_{k}^e$, $v_{k}^l$ are their longitudinal velocities; and $a_{k}^e$, $a_{k}^l$ are the accelerations. Variables $\xi_{k}^e \in \mathbb{R}^2$ and $\xi_{k}^l \in \mathbb{R}^2$ represent uncertainties of the EV and  LV models. We assume that  $\xi_{k}^l \in \Xi^l_{\rm true}$ and $\xi_{k}^e \in \Xi^e_{\rm true}$, where $\Xi^l_{\rm true}$ and $\Xi^e_{\rm true}$ are  the true uncertainty sets of the LV and the EV, respectively. In addition, we have $u_{k}^l \in \mathbb{U}^l_{\rm true}\subset \mathbb{R}$. Note that $\Xi^l_{\rm true}$, $\Xi^e_{\rm true}$,  and $\mathbb{U}^l_{\rm true}$  are unknown to the EV. Let $x^{\rm des} = [-L \ 0]^{\rm T}$, where $L$ is a desired safety distance. We define $x_{k} = x_{k}^e-x_{k}^l-x^{\rm des}$ and $u_k = a_k^e$. Thus, $w_{k} =  \xi_{k}^e-Bu_{k}^l-\xi_{k}^l$ is the disturbance of the  relative dynamics 
$x_{k+1} = Ax_{k} + Bu_{k} + w_{k}$
where  $w_{k} \in \mathbb{W}_{\rm true}:= \Xi^e_{\rm true} \oplus  (-\Xi^l_{\rm true}) \oplus (-B\mathbb{U}^l_{\rm true})$. 

The unknown set $\mathbb{W}_{\rm true}$ is overestimated by set $\mathbb{W}$. All the parameters for the implementations in this section are provided in our published code\footnote{\textcolor{blue}{\url{https://github.com/JianZhou1212/learning-based-rigid-tube-rmpc}}}. In the following, RMPC refers to the conventional robust MPC, i.e., the formulation \eqref{Eq:RigTubeMPCOPT}, while UQ-RMPC refers to the proposed algorithm, i.e., the formulation \eqref{Eq:QuanRigTubeMPCOPT}. Realisations from the sets $\Xi_{\rm true}^e$, $\Xi_{\rm true}^l$, and $\mathbb{U}_{\rm true}^l$ are uniformly selected at random. Problems \eqref{Eq:RigTubeMPCOPT} and \eqref{Eq:QuanRigTubeMPCOPT} are solved by \texttt{Ipopt}~\citep{wachter2006implementation} in \texttt{CasADi}~\citep{andersson2019casadi}, and set calculations are implemented by \texttt{Multi-Parametric Toolbox 3.0}~\citep{herceg2013multi} and \texttt{Yalmip} \citep{lofberg2004yalmip}. 

\vspace{-0.20cm}
\subsection{Comparison of the Feasible Regions} \label{Sec:Feasible Region Compare}
\vspace{-0.1cm}
We begin by comparing the feasible regions of RMPC and UQ-RMPC. For RMPC, the feasible region is defined as $\mathcal{F}(\mathbb{S} ,h_s,\nu_s)=\mathcal{F}(h_s,\nu_s) \oplus \mathbb{S}$, with $\mathcal{F}(h_s,\nu_s) = \Big\{s\in \mathbb{R}^{n_x}: \exists \textbf{c}  \text{ such that }   \bar{F}\Psi^i \begin{bmatrix} s \\ \bm{c} \end{bmatrix}\leq \bm{1}-h_s,   \forall i\in\mathbb{N}_{[0,\nu_s]}  \Big\}$. Similarly, the initial feasible region of UQ-RMPC is defined as $\mathcal{F}(\hat{\mathbb{S}}^{\star}_0 ,h^{\star}_0,\nu_0)$, which depends on the  initial disturbance information set $\mathcal{I}_0^w$.  As a baseline, we denote by $\hat{\mathbb{W}}_{\rm opt}$ the minimum set in form of \eqref{Eq:Wva} that covers $\mathbb{W}_{\rm true}$ and by $\hat{\mathcal{F}}_{\rm opt}$ its feasible region. In addition, the feasible region associated with $\mathbb{W}_{\rm true}$ is denoted by $\mathcal{F}_{\rm true}$. For simplicity, we write  $\mathcal{F}_{\rm MPC}=\mathcal{F}(\mathbb{S} ,h_s,\nu_s)$ and $\hat{\mathcal{F}}_{0}=\mathcal{F}(\hat{\mathbb{S}}^{\star}_0 ,h^{\star}_0,\nu_0)$. 

The comparison is shown in Fig.~\ref{Fig: F_vs_F_hat}. It is seen in Figs.~\ref{Fig: F_vs_F_hat}(a),(c) that the sets $\hat{\mathbb{W}}_0^{\star}$ are smaller than $\mathbb{W}$. This considerably increases the feasible region compared with $\mathcal{F}_{\rm MPC}$, as shown in Fig.~\ref{Fig: F_vs_F_hat}(b),(d). In addition, the set $\hat{\mathbb{W}}_0^{\star}$ approaches $\hat{\mathbb{W}}_{\rm opt}$ as $|\mathcal{I}_0^w|$ increases, and $\hat{\mathcal{F}}_0$ also approaches $\hat{\mathcal{F}}_{\rm opt}$. To further evaluate the impact of $\mathcal{I}_0^w$ on learning of $\hat{\mathbb{W}}_0^{\star}$, we compute the mean and standard deviation of the volume of $\hat{\mathbb{W}}_0^{\star}$, which is denoted by $\hat{\rm V}_0^{\star}$, by running the  Monte-Carlo (MC) simulations over $30$ different realisations, with  $|\mathcal{I}_0^w|$ ranging from $5$ to $10000$.  The results are shown in Fig.~\ref{Fig:onlinetraj}(a). We see that as $|\mathcal{I}_0^w|$ increases,  the mean of $\hat{\rm V}_0^{\star}$ approaches the volume of $\hat{\mathbb{W}}_{\rm opt}$ (the dashed line) and the standard deviation decreases. Despite the gap between $\hat{\mathbb{W}}^{\star}_0$ and $\hat{\mathbb{W}}_{\rm opt}$ shown in Fig.~\ref{Fig:onlinetraj}(a), the online implementation in Section~\ref{sec: Online Evaluation of UQ-RMPC} shows that the set $\hat{\mathbb{W}}^{\star}_0$  still exhibits sufficient robustness even if $|\mathcal{I}_0^w|$ is small.   

\vspace{-0.20cm}
\subsection{Online Evaluation of UQ-RMPC}\label{sec: Online Evaluation of UQ-RMPC}
\vspace{-0.1cm}
For online implementation, we run UQ-RMPC with $|\mathcal{I}_0^w|=100$ and $|\mathcal{I}_0^w|=20000$, respectively. 
The initial relative state is $x_0 = [-12 \  5]^{\rm T}$. The trajectories of the true relative state $x_k$ and the nominal state $s_{0|k}^{\star}$, and the associated sets $\hat{\mathbb{S}}^{\star}_k$, are shown in Fig.~\ref{Fig:onlinetraj}(b),(c), where both trajectories can be seen to converge to a neighbourhood of the origin. In addition, the set $\hat{\mathbb{S}}_k^{\star}$ increases with $|\mathcal{I}_0^w|$, which implies that the robustness is enhanced.

To investigate the online evolution of $\hat{\mathbb{W}}_k^{\star}$, we run MC simulations for $30$ realisations over $10000$ time steps with $|\mathcal{I}_0^w|=10$.  We compute the mean and standard deviation of volume of  $\hat{\mathbb{W}}_k^{\star}$, denoted by $\hat{\rm V}_k^{\star}$, during the iteration. The results are presented in Fig.~\ref{Fig:onlinetraj}(d). Similar to Fig.~\ref{Fig:onlinetraj}(a), the mean of $\hat{\rm V}_k^{\star}$ approaches the volume of $\hat{\mathbb{W}}_{\rm opt}$ (the dashed line) and the standard deviation decreases. We further evaluate robustness of UQ-RMPC by empirically quantifying the number of instances of infeasibility during implementation. We run  MC simulations for $300$ realisations with different $|\mathcal{I}_0^w|$. For each realisation, UQ-RMPC is run for $20$ time steps. We select the initial state $x_0$  close to the boundary of $\mathcal{F}(\hat{\mathbb{S}}_0^{\star}, h_0^{\star}, \nu_0)$, which is more likely to make the problem infeasible. The results are summarised in Table~\ref{Table_online_MC_MPC}, where a feasible realisation means the states and control inputs  satisfy the constraints, and $\epsilon_{\rm max}$ is calculated by $\epsilon_{\rm max} = \frac{{\rm e}}{{\rm e} - 1}({\rm ln}(\frac{1}{\gamma} + n_x))/|\mathcal{I}_0^w|$ in Theorem~\ref{Theo: risk bound} by fixing $\gamma = 0.005$. Here $\epsilon_{\rm max}$ indicates the risk of infeasibility. We see that UQ-RMPC encounters a risk of infeasibility when $|\mathcal{I}_0^w|$ is small, while the feasibility rate reaches $100\%$ when $|\mathcal{I}_0^w|$ increases. 
\begin{table}[t]
    \centering
    \scriptsize
    \caption{Feasibility of UQ-RMPC in random simulations.}
    \label{Table_online_MC_MPC}
    \begin{tabular}{cccccccc}
        \toprule
        $|\mathcal{I}_0^w|$ & $x_0$ & \textbf{Feas. Rate} & $\epsilon_{\rm max}$ & $|\mathcal{I}_0^w|$ & $x_0$ & \textbf{Feas. Rate} & $\epsilon_{\rm max}$ \\
        \midrule
        $10$ & $[-14.9 \ 6.841]^{\rm T}$ & $84.0\%$ & $0.8398$ & $100$ & $[-14.9 \ 6.491]^{\rm T}$ & $100\%$ & $0.0840$ \\
        $200$ & $[-14.9 \ 6.390]^{\rm T}$ & $100\%$ & $0.0420$ & $500$ & $[-14.9 \ 6.308]^{\rm T}$ & $100\%$ & $0.0168$ \\
        $2000$ & $[-14.9 \ 6.228]^{\rm T}$ & $100\%$ & $0.0042$ & $5000$ & $[-14.9 \ 6.232]^{\rm T}$ & $100\%$ & $0.0017$\\
        \bottomrule
    \end{tabular}
\end{table}

\vspace{-0.20cm}
\subsection{Comparison with Scenario MPC}
\vspace{-0.1cm}
We further compare the proposed UQ-RMPC with scenario MPC (SC-MPC) \citep{SCHILDBACH20143009}. The number of scenarios in SC-MPC is chosen as $K_{\rm sc} = |\mathcal{I}_0^w|/N$. The computations are performed on a standard laptop with an Intel i7-10750H CPU, 32.0 GB RAM running Ubuntu 22.04 LTS and MATLAB R2021b. Both MPC controllers are executed for $50$ steps, and the mean and the maximal computation time of solving UQ-RMPC and SC-MPC are summarised in Table.~\ref{Table_computation_time}. We see  that although both methods are convergent in these cases, UQ-RMPC significantly reduces the computation time compared with the SC-MPC. In addition, the computation time of UQ-RMPC generally remains stable when the sampling complexity increases, while for the SC-MPC, the computation time increases considerably with the sampling complexity.
\begin{table}[t]\scriptsize
\centering  
\begin{threeparttable}
\caption{Comparison of computation time between UQ-RMPC and SC-MPC.}  
\label{Table_computation_time}  
\begin{tabular}{ccccccccc}  
    \toprule  
    \multicolumn{2}{c}{\underline{\textbf {Sampling Complexity}}} & \multicolumn{3}{c}{\underline{\textbf {UQ-RMPC}}} & \multicolumn{3}{c}{\underline{\textbf {SC-MPC}}} \\
    \multicolumn{1}{c}{$|\mathcal{I}_0^w|$} & \multicolumn{1}{c}{$K_{\rm sc}$} & \multicolumn{1}{c}{\textbf{Mean}} & \multicolumn{1}{c}{\textbf{Max.}} & \multicolumn{1}{c}{\textbf{Convergence}} & \multicolumn{1}{c}{\textbf{Mean}} & \multicolumn{1}{c}{\textbf{Max.}} & \multicolumn{1}{c}{\textbf{Convergence}} \\
    \midrule
        $400$ & $50$ & $0.0674 \ {\rm s}$ & $0.155 \ {\rm s}$ & \checkmark & $0.141 \ {\rm s}$ & $0.233 \ {\rm s}$ & \checkmark \\ 
    $800$ & $100$ & $0.0641 \ {\rm s}$ & $0.0932 \ {\rm s}$ & \checkmark & $0.261 \ {\rm s}$ & $0.489 \ {\rm s}$ & \checkmark \\ 
        $1600$ & $200$ & $0.0640 \ {\rm s}$ & $0.0784 \ {\rm s}$ & \checkmark & $0.606 \ {\rm s}$ & $1.201 \ {\rm s}$ & \checkmark \\ 
        $3200$ & $400$ & $0.0941 \ {\rm s}$ & $0.114 \ {\rm s}$ & \checkmark & $2.273 \ {\rm s}$ & $3.522 \ {\rm s}$ & \checkmark \\ 
    \bottomrule
\end{tabular}
\end{threeparttable}
\end{table} 

\vspace{-0.3cm}
\section{Conclusion}\label{section:conclusion}
\vspace{-0.1cm}
This paper investigates robust tube MPC with online learning of uncertainty sets for discrete-time linear systems subject to state and input constraints. The observed disturbance realisations are used to learn the unknown true disturbance set by parameterising a prior given but conservative set. We provide a bound on the statistical gap between the true and quantified disturbance sets. The parameterisation of the quantified disturbance set allows the corresponding rigid tube bounding disturbance propagation to be computed efficiently. We propose an online implementation that updates the quantified disturbance set and the corresponding rigid tube at every time step. Numerical simulations  demonstrate the efficacy of our proposed algorithm and compare with conventional robust MPC and scenario MPC, respectively. Future directions of interest include the quantification of stochastic uncertainty and its integration with stochastic MPC, as well as investigation of scalability of this approach to higher dimensional systems.
\acks{\small This work is supported in part by Swedish Research Council Distinguished Professor Grant 2017-01078, Swedish Research Council International Postdoc Grant 2021-06727, Knut and Alice Wallenberg Foundation Wallenberg Scholar Grant, Deutsche Forschungsgemeinschaft (DFG, German Research Foundation) under Germany's Excellence Strategy - EXC 2075 – 390740016, and the Strategic Research Area at Link\"oping-Lund in Information Technology (ELLIIT).}
\bibliography{bibfile}

\begin{thebibliography}{27}
\providecommand{\natexlab}[1]{#1}
\providecommand{\url}[1]{\texttt{#1}}
\expandafter\ifx\csname urlstyle\endcsname\relax
  \providecommand{\doi}[1]{doi: #1}\else
  \providecommand{\doi}{doi: \begingroup \urlstyle{rm}\Url}\fi

\bibitem[Alamo et~al.(2010)Alamo, Tempo, and Luque]{5531078}
T.~Alamo, R.~Tempo, and A.~Luque.
\newblock On the sample complexity of randomized approaches to the analysis and design under uncertainty.
\newblock In \emph{Proceedings of American Control Conference}, pages 4671--4676, 2010.

\bibitem[Alexeenko and Bitar(2020)]{9303863}
Polina Alexeenko and Eilyan Bitar.
\newblock Nonparametric estimation of uncertainty sets for robust optimization.
\newblock In \emph{59th IEEE Conference on Decision and Control}, pages 1196--1203, 2020.

\bibitem[Andersson et~al.(2019)Andersson, Gillis, Horn, Rawlings, and Diehl]{andersson2019casadi}
Joel~A.E. Andersson, Joris Gillis, Greg Horn, James~B. Rawlings, and Moritz Diehl.
\newblock {CasADi}: A software framework for nonlinear optimization and optimal control.
\newblock \emph{Mathematical Programming Computation}, 11:\penalty0 1--36, 2019.

\bibitem[Calafiore and Campi(2006)]{calafiore2006scenario}
Giuseppe~Carlo Calafiore and Marco~C. Campi.
\newblock The scenario approach to robust control design.
\newblock \emph{IEEE Transactions on Automatic Control}, 51\penalty0 (5):\penalty0 742--753, 2006.

\bibitem[Dixit et~al.(2023)Dixit, Lindemann, Wei, Cleaveland, Pappas, and Burdick]{dixit2023adaptive}
Anushri Dixit, Lars Lindemann, Skylar~X. Wei, Matthew Cleaveland, George~J. Pappas, and Joel~W. Burdick.
\newblock Adaptive conformal prediction for motion planning among dynamic agents.
\newblock In \emph{Proceedings of The 5th Annual Learning for Dynamics and Control Conference}, volume 211, pages 300--314, 2023.

\bibitem[Gao et~al.(2021)Gao, Cannon, Xie, and Johansson]{gao2021invariant}
Yulong Gao, Mark Cannon, Lihua Xie, and Karl~Henrik Johansson.
\newblock Invariant cover: Existence, cardinality bounds, and computation.
\newblock \emph{Automatica}, 129:\penalty0 109588, 2021.

\bibitem[Goodwin et~al.(2014)Goodwin, Kong, Mirzaeva, and Seron]{goodwin2014robust}
Graham~C. Goodwin, He~Kong, Galina Mirzaeva, and Mar{\'\i}a~M. Seron.
\newblock Robust model predictive control: Reflections and opportunities.
\newblock \emph{Journal of Control and Decision}, 1\penalty0 (2):\penalty0 115--148, 2014.

\bibitem[Herceg et~al.(2013)Herceg, Kvasnica, Jones, and Morari]{herceg2013multi}
Martin Herceg, Michal Kvasnica, Colin~N. Jones, and Manfred Morari.
\newblock Multi-parametric toolbox 3.0.
\newblock In \emph{Proceedings of European Control Conference}, pages 502--510, 2013.

\bibitem[Hewing and Zeilinger(2020)]{8882241}
Lukas Hewing and Melanie~N. Zeilinger.
\newblock Scenario-based probabilistic reachable sets for recursively feasible stochastic model predictive control.
\newblock \emph{IEEE Control Systems Letters}, 4\penalty0 (2):\penalty0 450--455, 2020.

\bibitem[Hewing et~al.(2020)Hewing, Wabersich, and Zeilinger]{HEWING2020109095}
Lukas Hewing, Kim~P. Wabersich, and Melanie~N. Zeilinger.
\newblock Recursively feasible stochastic model predictive control using indirect feedback.
\newblock \emph{Automatica}, 119:\penalty0 109095, 2020.

\bibitem[Kouvaritakis and Cannon(2016)]{kouvaritakis2016model}
Basil Kouvaritakis and Mark Cannon.
\newblock \emph{{Model Predictive Control: Classical, Robust and Stochastic}}.
\newblock Springer, 2016.

\bibitem[Langson et~al.(2004)Langson, Chryssochoos, Rakovi{\'c}, and Mayne]{langson2004robust}
Wilbur Langson, Ioannis Chryssochoos, S.V. Rakovi{\'c}, and David~Q. Mayne.
\newblock Robust model predictive control using tubes.
\newblock \emph{Automatica}, 40\penalty0 (1):\penalty0 125--133, 2004.

\bibitem[Lindemann et~al.(2023)Lindemann, Cleaveland, Shim, and Pappas]{lindemann2023safe}
Lars Lindemann, Matthew Cleaveland, Gihyun Shim, and George~J. Pappas.
\newblock Safe planning in dynamic environments using conformal prediction.
\newblock \emph{IEEE Robotics and Automation Letters}, 8\penalty0 (8):\penalty0 5116--5123, 2023.

\bibitem[L\"ofberg(2004)]{lofberg2004yalmip}
Johan L\"ofberg.
\newblock {YALMIP}: A toolbox for modeling and optimization in matlab.
\newblock In \emph{Proceedings of IEEE International Conference on Robotics and Automation}, pages 284--289, 2004.

\bibitem[Luedtke and Ahmed(2008)]{doi:10.1137/070702928}
James Luedtke and Shabbir Ahmed.
\newblock A sample approximation approach for optimization with probabilistic constraints.
\newblock \emph{SIAM Journal on Optimization}, 19\penalty0 (2):\penalty0 674--699, 2008.

\bibitem[Mammarella et~al.(2020)Mammarella, Alamo, Dabbene, and Lorenzen]{9206383}
Martina Mammarella, Teodoro Alamo, Fabrizio Dabbene, and Matthias Lorenzen.
\newblock Computationally efficient stochastic {MPC}: {A} probabilistic scaling approach.
\newblock In \emph{IEEE Conference on Control Technology and Applications}, pages 25--30, 2020.

\bibitem[Margellos et~al.(2014)Margellos, Goulart, and Lygeros]{6727399}
Kostas Margellos, Paul Goulart, and John Lygeros.
\newblock On the road between robust optimization and the scenario approach for chance constrained optimization problems.
\newblock \emph{IEEE Transactions on Automatic Control}, 59\penalty0 (8):\penalty0 2258--2263, 2014.

\bibitem[Mayne(2018)]{MAYNE2018169}
David~Q. Mayne.
\newblock Competing methods for robust and stochastic {MPC}.
\newblock \emph{IFAC-PapersOnLine}, 51\penalty0 (20):\penalty0 169--174, 2018.
\newblock 6th IFAC Conference on Nonlinear Model Predictive Control NMPC 2018.

\bibitem[Mayne et~al.(2011)Mayne, Kerrigan, and Falugi]{mayne2011robust}
David~Q. Mayne, Eric~C. Kerrigan, and Paola Falugi.
\newblock Robust model predictive control: advantages and disadvantages of tube-based methods.
\newblock \emph{IFAC Proceedings Volumes}, 44\penalty0 (1):\penalty0 191--196, 2011.

\bibitem[Prandini et~al.(2012)Prandini, Garatti, and Lygeros]{6426462}
Maria Prandini, Simone Garatti, and John Lygeros.
\newblock A randomized approach to stochastic model predictive control.
\newblock In \emph{51st IEEE Conference on Decision and Control}, pages 7315--7320, 2012.

\bibitem[Rakovic et~al.(2005)Rakovic, Kerrigan, Kouramas, and Mayne]{rakovic05}
S.V. Rakovic, E.C. Kerrigan, K.I. Kouramas, and D.Q. Mayne.
\newblock Invariant approximations of the minimal robust positively invariant set.
\newblock \emph{IEEE Transactions on Automatic Control}, 50\penalty0 (3):\penalty0 406--410, 2005.

\bibitem[Saltık et~al.(2018)Saltık, Özkan, Ludlage, Weiland, and {Van den Hof}]{saltik2018outlook}
M.~Bahadır Saltık, Leyla Özkan, Jobert~H.A. Ludlage, Siep Weiland, and Paul~M.J. {Van den Hof}.
\newblock An outlook on robust model predictive control algorithms: Reflections on performance and computational aspects.
\newblock \emph{Journal of Process Control}, 61:\penalty0 77--102, 2018.

\bibitem[Schildbach et~al.(2014)Schildbach, Fagiano, Frei, and Morari]{SCHILDBACH20143009}
Georg Schildbach, Lorenzo Fagiano, Christoph Frei, and Manfred Morari.
\newblock The scenario approach for stochastic model predictive control with bounds on closed-loop constraint violations.
\newblock \emph{Automatica}, 50\penalty0 (12):\penalty0 3009--3018, 2014.

\bibitem[Shafer and Vovk(2008)]{shafer2008tutorial}
Glenn Shafer and Vladimir Vovk.
\newblock A tutorial on conformal prediction.
\newblock \emph{Journal of Machine Learning Research}, 9\penalty0 (3), 2008.

\bibitem[Shang and You(2019)]{shang2019data}
Chao Shang and Fengqi You.
\newblock A data-driven robust optimization approach to scenario-based stochastic model predictive control.
\newblock \emph{Journal of Process Control}, 75:\penalty0 24--39, 2019.

\bibitem[Shang et~al.(2020)Shang, Chen, Stroock, and You]{8727721}
Chao Shang, Wei-Han Chen, Abraham~Duncan Stroock, and Fengqi You.
\newblock Robust model predictive control of irrigation systems with active uncertainty learning and data analytics.
\newblock \emph{IEEE Transactions on Control Systems Technology}, 28\penalty0 (4):\penalty0 1493--1504, 2020.

\bibitem[W{\"a}chter and Biegler(2006)]{wachter2006implementation}
Andreas W{\"a}chter and Lorenz~T. Biegler.
\newblock On the implementation of an interior-point filter line-search algorithm for large-scale nonlinear programming.
\newblock \emph{Mathematical programming}, 106:\penalty0 25--57, 2006.

\end{thebibliography}

\end{sloppypar}
\end{document}